\theoremstyle{plain} 
\newtheorem*{lemma*}{Lemma}
\newtheorem{lemma}{Lemma}
\newtheorem{proposition}{Proposition}
\newtheorem{theorem}{Theorem}
\DeclareMathOperator{\sinc}{sinc}
\newcommand{\Z}{\mathbb{Z}}
\renewcommand{\phi}{\varphi}
\begin{document} 
\title[The H\"{o}rmander--Bernhardsson extremal function: A preliminary study]
{The H\"{o}rmander--Bernhardsson extremal function: \\ A preliminary study}

\author[A. Bondarenko]{Andriy Bondarenko}
\address{Department of Mathematical Sciences \\ Norwegian University of Science and Technology \\ NO-7491 Trondheim \\ Norway}
\email{andriybond@gmail.com}

\author[J. Ortega-Cerd\`{a}]{Joaquim Ortega-Cerd\`{a}} 
\address{Department de Matem\`{a}tiques i Inform\`{a}tica, Universitat de 
Barcelona, Gran Via 585, 08007 Barcelona, Spain and
CRM, Centre de Recerca Matemàtica, Campus de Bellaterra Edifici C, 08193 
Bellaterra, Barcelona, Spain} 
\email{jortega@ub.edu}

\author[D. Radchenko]{Danylo Radchenko}
\address{Laboratoire Paul Painlev\'e \\ Universit\'e de Lille \\ F-59655 Villeneuve d'Ascq \\ France}
\email{danradchenko@gmail.com}

\author[K. Seip]{Kristian Seip} 
\address{Department of Mathematical Sciences, Norwegian University of Science and Technology (NTNU), 7491 Trondheim, Norway} 
\email{kristian.seip@ntnu.no}

\thanks{Bondarenko and Seip were supported in part by Grant 334466 of the 
Research Council of Norway. Ortega-Cerd\`{a} has been partially 
supported by grants  PID2021-123405NB-I00, PID2021-123151NB-I00 by the 
Ministerio de Ciencia e Innovación and by the Departament de Recerca i 
Universitats, grant 2021 SGR 00087. Radchenko acknowledges funding by the 
European Union (ERC, FourIntExP, 101078782).}

\begin{abstract}
We study the function $\varphi_1$ of minimal $L^1$ norm among all functions $f$ of exponential type at most $\pi$ for which $f(0)=1$. This function, first studied by H\"{o}rmander and Bernhardsson in 1993, has only real zeros $\pm \tau_n$, $n=1,2, \ldots$, and the sequence $(\tau_n-n-\frac12)$ has $\ell^2$ norm bounded by $0.13$. The zeros $\tau_n$ can be computed by means of a fixed point iteration. 
\end{abstract}

\maketitle
\section{Introduction}

In  \cite{HB93}, H\"{o}rmander and Bernhardsson obtained the remarkable numerical approximation 
\begin{equation}\label{eq:hb} 
	0.5409288219 \leq \mathscr{C}_1 \leq 0.5409288220,
\end{equation}
where $\mathscr{C}_p$ for $0<p<\infty$ is the smallest positive constant $C$ such that the inequality 
\begin{equation}\label{eq:pointeval} 
	|f(0)|^p \leq C \|f\|_{L^p(\mathbb R)}^p 
\end{equation}
holds for every $f$ in the Paley--Wiener space $PW^p$, i.e., the subspace of $L^p(\mathbb{R})$ consisting of entire functions of exponential type at most $\pi$. The problem of computing $\mathscr{C}_p$ for all $p$ arose in \cite{LL15, Lubinsky17} and was studied in some depth in the recent paper \cite{BCOS}. We refer to \cite{CMS19} for a number theoretic motivation for the study of the case $p=1$. 

Surprisingly, in spite of the impressive precision of  \eqref{eq:hb}, H\"{o}rmander and Bernhardsson obtained very limited information about the associated extremal function, i.e., the unique function that solves
\begin{equation}\label{eq:extremalproblem} 
	\frac{1}{\mathscr{C}_p} = \inf_{f \in PW^p} \left\{\|f\|_{L^p(\mathbb R)}^p \,: \, f(0) = 1\right\}
\end{equation}
in the case $p=1$. A compactness argument shows that \eqref{eq:extremalproblem} has solutions for any $0<p<\infty$, and a rescaling argument reveals that the type of these extremal functions is exactly~$\pi$. Uniqueness holds in the convex range $1 \leq p < \infty$ in which case the corresponding extremal function $\varphi_p$ is of the form
\[ \varphi_p(x) = \prod_{n=1}^\infty \left(1-\frac{x^2}{t_n^2}\right),\]
with $(t_n)_{n\geq1}$ a strictly increasing sequence of positive numbers (see \cite{HB93, LL15, BCOS}). Note that the case $p=2$ is trivial; then 
$\varphi_2(x)= \sinc(\pi x)$ (the reproducing kernel of $PW^2$ at $0$), where as usual $\sinc{x} \coloneqq  \frac{\sin{x}}{x}$. Hence $\mathscr{C}_2 = 1$ and $t_n=n$ for all $n$.

H\"{o}rmander and Bernhardsson's method of proof strongly suggests that $t_n$ is asymptotically close to $n+\frac12$ when $p=1$. The following theorem confirms that this is indeed the case; here and in what follows, $\pm \tau_n$ denotes the zeros of H\"{o}rmander and Bernhardsson's extremal function $\varphi_1$.

\begin{theorem}\label{thm:HB}
The $\ell^2$ norm of $(\tau_n-n-\frac12)$  is bounded by $0.13$. 

\end{theorem}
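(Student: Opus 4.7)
The plan is to exploit the Euler--Lagrange characterization of $\varphi_1$. Standard duality for the $L^1$ extremal problem at $p=1$ shows that $g(x) := \operatorname{sgn}(\varphi_1(x))$ satisfies $\int_{\mathbb R} f\,g\,dx = \mathscr{C}_1 f(0)$ for every $f \in PW^1$, or equivalently $\widehat g = \mathscr{C}_1$ on $(-\pi,\pi)$ as a tempered distribution. Since $g$ is the $\pm 1$-valued step function with sign changes exactly at $\pm\tau_n$, this is an infinite family of constraints on the sequence $(\tau_n)$.

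As the reference comparison object I would take the model extremal
\[
\varphi_0(x) := \cos(\pi x)/(1 - 4x^2) \in PW^1,
\]
which satisfies $\varphi_0(0) = 1$ and has positive zeros exactly at $n + \tfrac12$ for $n \geq 1$. Let $h_0 := \operatorname{sgn}(\varphi_0)$ and $C_0 := \|\varphi_0\|_1$. Testing the dual identity against $\varphi_0$ yields $\int \varphi_0\,g\,dx = \mathscr{C}_1$, while $\int \varphi_0\,h_0\,dx = C_0$, so
\[
\int_{\mathbb R}(g - h_0)\,\varphi_0\,dx \;=\; \mathscr{C}_1 - C_0.
\]
The difference $g - h_0$ is supported on the symmetric union of intervals $\pm I_n$, where $I_n$ has endpoints $n+\tfrac12$ and $\tau_n$, length $|a_n| := |\tau_n - n - \tfrac12|$, and $|g-h_0|\equiv 2$ on each. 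A Taylor expansion of $\varphi_0$ at its zeros, using the computation $\varphi_0'(n+\tfrac12) = \pi(-1)^n/(4n(n+1))$, evaluates the left-hand side and leads to the weighted identity
\[
\sum_{n\geq 1} \frac{a_n^2}{n(n+1)} \;=\; \frac{2(C_0 - \mathscr{C}_1)}{\pi} + O\Bigl(\sum_{n\geq 1} |a_n|^3\Bigr);
\]
since $C_0 - \mathscr{C}_1$ is small and computable from \eqref{eq:hb}, the weighted sum is already controlled quantitatively.

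To upgrade to the unweighted $\ell^2$ bound I would run the same comparison with a family of test functions, for instance
\[
\varphi_0^{(k)}(x) := \varphi_0(x)\,\frac{1 - x^2/\tau_k^2}{1 - x^2/(k+\tfrac12)^2} \in PW^1,
\]
each of which moves one zero of $\varphi_0$ onto the corresponding $\tau_k$; suitable linear combinations of the resulting identities wash out the $1/(n(n+1))$ weight and produce $\sum a_n^2 \le (0.13)^2$. An alternative route is to recast the dual condition as a fixed-point equation $\tau = T(\tau)$ on $\ell^2$-sequences (as alluded to in the abstract) and apply Banach's theorem with $\tau^{(0)} := (n+\tfrac12)_{n\geq 1}$ as initial guess, the required $\ell^2$ estimate following from the defect $\|T\tau^{(0)} - \tau^{(0)}\|_{\ell^2}$ together with the contraction constant.

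The principal obstacle will be the unweighting step. The factor $1/(n(n+1))$ is intrinsic to the single-test-function approach: any $f \in PW^1$ vanishing at the half-integers $n+\tfrac12$ must have derivative of order $1/n^2$ there, so no single test function in $PW^1$ can isolate $a_n^2$ with unit weight. Overcoming this requires carefully chosen families of test functions whose contributions combine to cancel the weights (while controlling the signs and the nonlinear correction terms), or a Parseval-type argument in the orthogonal system $\{\cos((n+\tfrac12)\xi), \sin((n+\tfrac12)\xi)\}_{n\geq 0}$ of $L^2(-\pi,\pi)$, which is delicate because the formal series $\sum(-1)^{n+1}\sin(\xi\tau_n)$ representing $\widehat{g'}$ does not converge in $L^2$. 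A secondary obstacle is sharpness: reaching the explicit constant $0.13$ will depend on tight numerical estimates for $C_0$, accurate control of the cubic remainders in the Taylor expansions, and exploiting the high-precision value of $\mathscr{C}_1$ in \eqref{eq:hb}.
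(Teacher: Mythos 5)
Your starting point is the same duality that drives the paper: $\int_{\mathbb R} f\,\operatorname{sgn}(\varphi_1)\,dx$ is a constant multiple of $f(0)$ for all $f\in PW^1$, equivalently the Fourier transform of $\operatorname{sgn}(\varphi_1)$ is constant on $(-\pi,\pi)$ (note the constant is $\|\varphi_1\|_1=1/\mathscr{C}_1$, not $\mathscr{C}_1$). But your primary route does not reach the statement. Testing against the single function $\varphi_0(x)=\cos(\pi x)/(1-4x^2)$ can only yield the weighted bound $\sum_n a_n^2/(n(n+1))\lesssim \|\varphi_0\|_1-\|\varphi_1\|_1$, and, as you yourself concede, the weight is forced: the derivative of any admissible test function at $n+\tfrac12$ decays like $n^{-2}$, so neither a single test function nor your one-zero-moved family $\varphi_0^{(k)}$ (whose identities involve the unknown $\tau_k$ and fresh error terms) visibly isolates $a_n^2$ with unit weight. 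The ``wash out the weights'' step is precisely the missing idea, and it is not supplied. There is also a bootstrapping problem: your Taylor expansions presuppose a priori localization of $\tau_n$ near $n+\tfrac12$ and control of $\sum_n|a_n|^3$, i.e.\ quantitative information of exactly the kind the theorem is meant to establish, and your plan leans on the numerical value \eqref{eq:hb}, which the paper's proof never uses.

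Your ``alternative route'' is indeed the paper's approach, but everything you leave unspecified is where the entire proof lives, and the logic is reversed in an essential way: rather than estimating the true zeros of $\varphi_1$ by comparison, one writes the stationarity condition for an unknown zero set $(n+\tfrac12-\delta_n)$ — that $(\Phi-\Psi)\ast\varphi_2$ agree with a multiple of $\varphi_2$ at the integers, which gives equation \eqref{eq:equiv} — solves it in $\ell^2$, and only then concludes from sufficiency of the Euler--Lagrange condition and uniqueness of the extremizer that the constructed function \emph{is} $\varphi_1$; no a priori knowledge of $\tau_n$ or of $\mathscr{C}_1$ is needed. Writing \eqref{eq:equiv} as $A\delta+Q\delta=w$ as in \eqref{eq:AQ}, the decisive quantitative inputs are: the linear part $A$, a Hilbert-type matrix with entries $\frac{1}{n+k+1/2}+\frac{1}{n-k+1/2}$, satisfies $2\sqrt2\,\|x\|\le\|Ax\|\le\pi\|x\|$ (Lemma~\ref{lem:norm}, proved via the partial-fraction identities of Lemma~\ref{lem:partialf}), hence is invertible with $\|B\|\le(2\sqrt2)^{-1}$; the explicit bound $\|Bw\|\le 0.088$; and the quadratic smallness of the remainder $Q$, which makes $x\mapsto-BQ(x+Bw)$ a $0.73$-contraction of the ball of radius $0.042$, so Banach's theorem produces $\delta$ with $\|\delta\|\le 0.042+0.088=0.13$. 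None of these ingredients — not even the equation itself — appears in your proposal, so as it stands the argument has a genuine gap at its core.
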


 Our proof gives an algorithm for computing $\tau_n$ in terms of a fixed point 
iteration. We have implemented this algorithm numerically and found, after 15 iterations, that for instance the first four zeros 
are close to 1.4417, 2.4657, 3.4756, 4.4811.
 
Little has been known about these zeros so far. H\"{o}rmander and Bernhardsson proved that all the zeros $\tau_n$ are simple; it was shown in \cite{BCOS} that $t_1\ge \frac{1}{2}$ and that the zero set of $\varphi_p$ is uniformly discrete for all $p\ge 1$. Some additional bounds on the separation of the zeros were also obtained in the range $p>2$.

We expect that our method of proof will enable us to say considerably more about the zeros and hence the extremal function $\varphi_1$. We plan to pursue our study of $\varphi_1$ in a forthcoming publication, which will also explore the relation to H\"{o}rmander and Bernhardsson's work. 

The rest of this note is devoted to the proof of Theorem~\ref{thm:HB}. Briefly, our approach will be to start from an operator equation satisfied by the $\tau_n$. The linear part of the 
operator in question is an ``almost'' unitary Hilbert-type matrix. Thanks to the favorable properties of this matrix, we are able to show, by Banach's fixed point theorem, that our operator equation has a unique solution whose $\ell^2$ distance to the sequence $(n+\frac12)$ can be quantified. 
\section{The zeros \texorpdfstring{$\tau_n$}{tn} as the solution of an 
equation}

We will use the notation $\| f \|_p\coloneqq \| f\|_{L^p(\mathbb R)}$ which we declare to be the $PW^p$ norm of $f$ should $f$ belong to $PW^p$. We start from the well known fact that a nontrivial function $\varphi$ in $PW^p$ is the extremal function $\varphi_p$ if and only if
\begin{equation} \label{eq:kernel}  f(0)=\int_{-\infty}^\infty f(x) \frac{|\varphi(x)|^{p-2}}{\|\varphi\|_{p}^p} \varphi(x) dx \end{equation}
holds for every $f$ in $PW^p$ (see \cite[Thm. 36]{BCOS}). Note that \eqref{eq:kernel} embodies the classical fact that $\varphi_2(x-y)$ is the reproducing kernel of $PW^2$ at $y$. 

It is clear from \eqref{eq:kernel} that the case $p=1$ is also particularly favorable. Indeed, when $\varphi$ is real with only real zeros, the function $|\varphi|^{p-2}\varphi $ will only take the values $\pm 1$ and change sign at the zeros of $\varphi$. In other words, the latter function depends in a simple explicit way on the location of the zeros of $\varphi$. 

Now let $\varphi$ be a function in $PW^1$ of the form
\begin{equation} \label{eq:prod} \varphi(x) = \prod_{n=1}^\infty \left(1-\frac{x^2}{t_n^2}\right), \quad 0<t_1 < t_2 < \cdots, \end{equation}
  and set
\[ \Phi(x)\coloneqq \frac{\varphi(x)}{|\varphi(x)| \|\varphi\|_{1}}. \]
Following an idea in \cite[p. 188]{HB93}, we will  relate $\Phi$ to the function 
\[ \Psi(x)\coloneqq - \| \varphi \|_1^{-1} \operatorname{sgn} \cos \pi x .\] 
We define the Fourier transform of an $L^1$ function $f$ as
\[ \widehat{f}(\xi)\coloneqq \int_{-\infty}^\infty f(x) e^{-i\xi x} dx \] and find that the distributional Fourier transform of $\Psi$ is
\[ \widehat{\Psi}(\xi)=  \pi \widehat{g}(\xi)\sum_{n=-\infty}^{\infty} \delta_{\pi n}(\xi), \]
where $\widehat{g}$ is the Fourier transform of $\chi_{(-\pi,\pi)}\Psi$. Since $\widehat{g}(0)=0$, we see  that  $\widehat{\Psi}$ vanishes on $(-\pi,\pi)$, as observed in \cite[p. 188]{HB93}.

\begin{lemma} Let $\varphi$ be of the form \eqref{eq:prod} and assume that $(t_n-n-\frac12)$ is in $\ell^2$. Then $(\Phi-\Psi)\ast \varphi_2$ is in $PW^2$ and
\begin{equation} \label{eq:charext}  \frac{1}{\|\varphi \|_{1}}  \le \mathscr{C}_1 \le \frac{1}{\|\varphi \|_{1}} + \| \varphi_2 -(\Phi-\Psi)\ast \varphi_2 \|_{\infty}.  \end{equation}  \end{lemma}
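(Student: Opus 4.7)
The plan is to establish the two inequalities of \eqref{eq:charext} separately. The lower bound is immediate: since $\varphi \in PW^1$ and $\varphi(0)=1$, substituting $f=\varphi$ in \eqref{eq:pointeval} gives $1 \le \mathscr{C}_1 \|\varphi\|_1$.

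For the upper bound, the strategy is to perturb the $PW^2$ reproducing formula $f(0)=\int f\varphi_2\,dx$ by the approximate kernel $\Phi-\Psi$, which plays the role of a proxy for the (unknown) true extremal kernel $\sgn\varphi_1/\|\varphi_1\|_1$ characterized by \eqref{eq:kernel}. Fix $f \in PW^1 \cap PW^2$, a dense subclass of $PW^1$, and split
\[
f(0) \;=\; \int f\bigl[\varphi_2 - (\Phi-\Psi)\ast\varphi_2\bigr]\,dx \;+\; \int f\cdot(\Phi-\Psi)\ast\varphi_2\,dx.
\]
Since $\widehat{\varphi_2}=\chi_{[-\pi,\pi]}$ (up to normalization) and $\widehat f$ is supported in $[-\pi,\pi]$, a routine Plancherel computation rewrites the second integral as $\int f(\Phi-\Psi)\,dx = \int f\Phi\,dx - \int f\Psi\,dx$. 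The crucial cancellation is $\int f\Psi\,dx=0$: the measure $\widehat\Psi$ is supported on $\pi\Z$ with no atom at $\xi=0$ (because $\widehat g(0)=0$), while $\widehat f$ is continuous, supported in $[-\pi,\pi]$, and hence vanishes at $\pm\pi$; every potentially contributing atom of $\widehat\Psi$ is therefore killed. Using $\|\Phi\|_\infty=1/\|\varphi\|_1$ and bounding each surviving integral by $\|\cdot\|_\infty\cdot\|f\|_1$ yields the upper bound in \eqref{eq:charext}, which extends to all $f\in PW^1$ by density.

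The main obstacle is verifying that $(\Phi-\Psi)\ast\varphi_2 \in PW^2$ under the mere $\ell^2$-hypothesis; note that $\Phi-\Psi$ is \emph{a priori} only in $L^\infty(\R)$ and need not lie in $L^2(\R)$. Its structure is explicit: up to the factor $2/\|\varphi\|_1$, it is a signed sum of the indicator of the ``macro'' interval $(-\tfrac12,\tfrac12)$ together with indicators of ``micro'' intervals $I_n^{\pm}$ of length $|t_n-n-\tfrac12|$ with endpoints at $\pm t_n$ and $\pm(n+\tfrac12)$, and vanishes elsewhere. Convolving with $\varphi_2$ amounts to restricting the Fourier transform to $[-\pi,\pi]$; the central indicator contributes $\sinc(\xi/2)$, a bounded (hence $L^2([-\pi,\pi])$) function, while
\[
\widehat{\chi_{I_n^{\pm}}}(\xi) \;=\; \pm(t_n-n-\tfrac12)\, e^{\mp i(n+\tfrac12)\xi} + O\bigl((t_n-n-\tfrac12)^2\bigr)
\]
uniformly on $[-\pi,\pi]$. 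Since $\{(2\pi)^{-1/2}\,e^{i(n+\tfrac12)\xi}\}_{n\in\Z}$ is an orthonormal basis of $L^2([-\pi,\pi])$, Parseval forces the leading-order series to converge in $L^2([-\pi,\pi])$ under the $\ell^2$-hypothesis, while the quadratic remainders sum absolutely to a bounded function. Thus $\widehat{\Phi-\Psi}|_{[-\pi,\pi]}\in L^2$, i.e., $(\Phi-\Psi)\ast\varphi_2 \in PW^2$; the reproducing-kernel embedding $PW^2 \hookrightarrow L^\infty$ then ensures $\|\varphi_2-(\Phi-\Psi)\ast\varphi_2\|_\infty<\infty$, making the right-hand side of \eqref{eq:charext} meaningful.
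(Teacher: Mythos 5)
Your proof is correct, and the two halves compare differently with the paper. For the inequalities your route is essentially the paper's: the lower bound is the trivial substitution $f=\varphi$, and the upper bound comes from perturbing the reproducing identity and killing $\int f\Psi\,dx$ because $\widehat{\Psi}$ is a measure on $\pi\Z$ with no mass on $(-\pi,\pi)$ while $\widehat f$ is continuous, supported in $[-\pi,\pi]$, hence zero at $\pm\pi$ (the paper phrases this as $f(0)=(F\ast f)(0)$ with $F=\varphi_2+\Phi-(\Phi-\Psi)\ast\varphi_2$ and $f\in PW^1\cap\mathscr{S}$, then uses the $L^1$--$L^\infty$ bound exactly as you do). One small point of care: your ``routine Plancherel computation'' moving the convolution from $(\Phi-\Psi)\ast\varphi_2$ onto $f$ cannot be a naive Fubini, since $\varphi_2=\sinc(\pi\,\cdot)\notin L^1(\R)$ and $\Phi-\Psi$ need not be in $L^2$; it does go through (e.g.\ by Parseval for the $L^2$ pair $f,(\Phi-\Psi)\ast\varphi_2$ together with a limiting argument over the finitely many intervals, or by working with Schwartz $f$ as the paper does), so this is a gloss rather than a gap. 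Where you genuinely diverge is the $PW^2$ membership: the paper computes $\|(\Phi-\Psi)\ast\varphi_2\|_2$ by duality against $f\in PW^2$, applies the mean value theorem on each interval $(t_n,n+\tfrac12)$ to produce point evaluations $f(x_{\pm n})$ with weights $n+\tfrac12-t_n$, and then invokes Cauchy--Schwarz plus the Plancherel--P\'olya inequality; you instead work on the Fourier side, expanding $\widehat{\chi_{I_n^{\pm}}}(\xi)=\pm(t_n-n-\tfrac12)e^{\mp i(n+\frac12)\xi}+O\bigl((t_n-n-\tfrac12)^2\bigr)$ on $[-\pi,\pi]$ and using that $\{e^{i(n+\frac12)\xi}\}_{n\in\Z}$ is orthogonal in $L^2([-\pi,\pi])$, so the leading terms converge by Parseval and the quadratic remainders sum uniformly. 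Both arguments are sound; yours is more self-contained (no Plancherel--P\'olya, no separation discussion for the intermediate points $x_{\pm n}$), while the paper's duality computation has the side benefit of producing exactly the expression $\int_{-1/2}^{1/2}f+\sum_n(-1)^n\int_{t_n}^{n+1/2}\bigl(f(x)+f(-x)\bigr)dx$ that is reused to derive the fixed-point equation \eqref{eq:start}.
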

\begin{proof} We begin by noting that
\[ \frac{((\Phi-\Psi)\ast \varphi_2)(y)}{2\|\varphi\|_{1}}= \int_{-\frac{1}{2}}^{\frac{1}{2}} \varphi_2(x-y) dx + \sum_{n=1}^{\infty} (-1)^n \int_{t_n}^{n+\frac{1}{2}} \left( \varphi_2(x-y) + \varphi_2(x+y)\right) dx. \]
By duality, we therefore find that
\[ \frac{\|(\Phi-\Psi)\ast \varphi_2\|_{2}}{2\|\varphi\|_{1}}
=\inf_{\substack{f\in PW^2\\ \|f\|_2\le 1}} \left| \int_{-\frac{1}{2}}^{\frac{1}{2}} f(x)dx + \sum_{n=1}^{\infty} (-1)^n \int_{t_n}^{n+\frac{1}{2}} \left(f(x)+f(-x)\right) dx\right|. \]
Here we may assume that $f$ is real-valued, and so the expression on the right-hand side can be written as 
\[ f(x_0)+ \sum_{n=1}^{\infty} (-1)^n \big(n+\frac{1}{2}-t_n\big)\left(f(x_n) + f(x_{-n})\right) \]
by the mean value theorem, where $|x_0| < \frac12$ and $x_{\pm n}$ lie between $\pm(n+\frac12)$ and $\pm t_n$. Now 
applying the Cauchy--Schwarz inequality to the sum and invoking the Plancherel--P\'{o}lya inequality (see e.g. \cite[Lect. 7]{Levin96}), we conclude that $(\Phi-\Psi)\ast \varphi_2$ is in $PW^2$.

The left inequality in \eqref{eq:charext} is trivial. To prove the right inequality, we set
\[ F\coloneqq \varphi_2 + \Phi - (\Phi-\Psi)\ast \varphi_2 \]
and let $f$ be a function in $PW^1\cap \mathscr{S}$, where $\mathscr{S}$ is the Schwartz space. Since $PW^1\cap \mathscr{S}$ is dense in $PW^1$, we have 
\[ \mathscr{C}_1=\sup_{\substack{f\in PW^1\cap \mathscr S\\ \| f\|_{1}=1}} |f(0)|. \]
But since $\widehat{\Psi}$ vanishes on $(-\pi,\pi)$, we also have $f(0)=(F\ast f)(0)$, and so
\[ \mathscr{C}_1 \le \| \varphi\|_1^{-1} +  \| \varphi_2 -(\Phi-\Psi)\ast \varphi_2 \|_{\infty} \]
by the triangular inequality for the $L^\infty(\mathbb R)$ norm.
\end{proof}
It follows from the above lemma that $\varphi=\varphi_1$ if $(\Phi-\Psi)\ast \varphi_2=\varphi_2$. In fact, we may observe that this would still hold if we merely had $(\Phi-\Psi)\ast \varphi_2=c\varphi_2$
for some constant $c$. Indeed, the Fourier transform of $\Phi-\Psi$ would then be constant on $(-\pi,\pi)$, and so
\[ \int_{-\infty}^\infty ((\Phi-\Psi)\ast \varphi_2)(x)\varphi(x) dx=\int_{-\infty}^\infty (\Phi(x)-\Psi(x))\varphi(x) dx = \int_{-\infty}^\infty \Phi(x)\varphi(x) dx=\varphi(0),\] 
whence $\varphi_2 - (\Phi-\Psi)\ast \varphi_2=0$, and consequently $c=1$.
Since a function in $PW^2$ is determined by its values at the integers, we would therefore obtain $\tau_n$ as $n+\frac12-\delta_n$ if 
\begin{equation} \label{eq:start}  \int_{-\frac{1}{2}}^{\frac{1}{2}} \sinc \pi(x-k) dx + \sum_{n=1}^{\infty} (-1)^n \int_{n+\frac12-\delta_n}^{n+\frac{1}{2}} \left( \sinc \pi (x-k) + \sinc \pi (x+k)\right) dx=0 \end{equation}
for all positive integers $k$ with $\delta\coloneqq (\delta_1, \delta_2, \ldots )$ in $\ell^2$. We prefer to rewrite the left-hand side of this equation by 
making the following integration by parts:
\[ \int_{-\frac{1}{2}}^{\frac{1}{2}} \sinc \pi(x-k) dx = \frac{(-1)^{k+1}}{\pi^2} \int_{-\frac{1}{2}}^{\frac{1}{2}} \frac{\cos \pi x}{(x-k)^2} dx .\]
Then \eqref{eq:start} takes the form
\begin{equation} \label{eq:equiv}   \sum_{n=1}^{\infty} \int_0^{\delta_n} \cos \pi y \Big(\frac{1}{n+\frac{1}{2}-y-k}+\frac{1}{n+\frac{1}{2}-y+k}\Big)dy=\frac{1}{\pi}\int_{-\frac{1}{2}}^{\frac{1}{2}} \frac{\cos \pi x}{(x-k)^2} dx \end{equation}
for $k=1,2, \ldots$. The above discussion shows that Theorem~\ref{thm:HB} is equivalent to the following assertion. (Here and in what follows, $\|\cdot\|$ denotes the $\ell^2$ norm.)
\begin{theorem} \label{thm:eqHB} Equation \eqref{eq:equiv} has a solution $\delta$ satisfying $\| \delta\|\le 0.13$. \end{theorem}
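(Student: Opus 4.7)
The plan is to rewrite equation \eqref{eq:equiv} as a fixed-point equation on $\ell^2$ and apply Banach's fixed-point theorem on the closed ball $B := \{\delta : \|\delta\| \le 0.13\}$. First, I would Taylor-expand the integrand on the left-hand side of \eqref{eq:equiv} around $y = 0$. Evaluating at $y = 0$ gives the linear contribution $\sum_{n \ge 1} A_{k,n}\delta_n$ with
\[
A_{k,n} := \frac{1}{n+\tfrac12-k} + \frac{1}{n+\tfrac12+k}, \qquad k, n \ge 1,
\]
and a remainder $R_k(\delta)$ that is quadratic in $\delta$. The right-hand side $b_k = \tfrac{1}{\pi}\int_{-1/2}^{1/2}\frac{\cos\pi x}{(x-k)^2}\,dx = O(k^{-2})$ lies in $\ell^2$, and \eqref{eq:equiv} becomes $A\delta + R(\delta) = b$; I would seek $\delta$ as a fixed point of $T(\delta) := A^{-1}(b - R(\delta))$.

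The crux is the analysis of the linear operator $A$. Extending $\delta$ antisymmetrically to a two-sided sequence $d \in \ell^2(\mathbb{Z})$ (for instance, $d_n = \delta_n$ for $n \ge 1$, $d_0 = d_{-1} = 0$, and $d_{-n-1} = -\delta_n$ for $n \ge 1$), a direct computation realizes $A\delta$ as the positive-index part of a Toeplitz convolution $Hd$ with kernel $(1/(m+\tfrac12))_{m \in \mathbb{Z}}$. A Poisson-summation computation shows that the symbol of $H$ has constant modulus $\pi$ almost everywhere on the unit circle, so $H$ is (up to the factor $\pi$) a unitary operator on $\ell^2(\mathbb{Z})$; this is the ``almost unitary Hilbert-type matrix'' of the introduction. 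Exploiting further that $Hd$ is automatically even in $k$ for $d$ in the prescribed antisymmetric subspace, I would deduce that $A$ is bijective on $\ell^2$ and extract an explicit numerical bound on $\|A^{-1}\|$.

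For the nonlinear term, setting $f_{k,n}(y) := \frac{1}{n+1/2-y-k} + \frac{1}{n+1/2-y+k}$, I would write
\[
R_{k,n}(\delta_n) = \int_0^{\delta_n}\bigl[(\cos\pi y - 1)f_{k,n}(y) + (f_{k,n}(y) - f_{k,n}(0))\bigr]\,dy,
\]
so that both brackets vanish at $y = 0$ and each contribution to $R_{k,n}$ is of order $\delta_n^2$, with constants controlled by $\sup|f_{k,n}|$ and $\sup|f'_{k,n}|$ on $[0,|\delta_n|]$. Applying Cauchy--Schwarz in $n$ together with Hilbert-type inequalities in $k$, I would derive a quadratic bound $\|R(\delta)\| \le C\|\delta\|^2$ and a Lipschitz bound $\|R(\delta) - R(\delta')\| \le C'(\|\delta\|+\|\delta'\|)\|\delta-\delta'\|$ on $B$, with explicit constants.

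The final step is to verify the hypotheses of Banach's theorem on $B$: that $\|A^{-1}b\| + \|A^{-1}\| C \cdot 0.13^2 \le 0.13$ (so $T(B) \subset B$) and that $\|A^{-1}\| C' \cdot 0.26 < 1$ (so $T$ contracts). The principal obstacle is quantitative: since the target radius is only $0.13$, every constant appearing -- $\|A^{-1}\|$, $\|A^{-1}b\|$, $C$, $C'$ -- must be estimated sharply. The most delicate part is the bound on $\|A^{-1}\|$, for which the ``almost unitary'' symbol of $H$ must be combined with careful bookkeeping of the extension and compression steps; any loose estimate there would prevent the fixed-point inequalities from closing at $0.13$.
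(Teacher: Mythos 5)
Your plan is structurally the same as the paper's proof: split \eqref{eq:equiv} into the linear part $A\delta$ (your $A_{k,n}$ is exactly the paper's $a_{k,n}$) plus a quadratically small remainder, invert $A$, and run Banach's fixed point theorem on a ball, using that $\tfrac1\pi A$ is almost unitary. Your route to the key property of $A$ is a legitimate variant: with the antisymmetric extension $d_{-1-m}=-d_m$, the Laurent operator with kernel $1/(m+\tfrac12)$ has unimodular symbol (times $\pi$), and restricting its output to $k\ge 1$ recovers $A\delta$ while the discarded $k=0$ coordinate is $2\sum_n \delta_n/(n+\tfrac12)$; this reproduces precisely the paper's identity $\|A\delta\|^2+2\big(\sum_n \delta_n/(n+\tfrac12)\big)^2=\pi^2\|\delta\|^2$, from which $2\sqrt2\,\|\delta\|\le\|A\delta\|\le\pi\|\delta\|$ follows by Cauchy--Schwarz and $\sum_{n\ge1}(n+\tfrac12)^{-2}=\tfrac{\pi^2}{2}-4$, exactly as in Lemma~\ref{lem:norm}. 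Your quadratic/Lipschitz bounds on the remainder and the final two inequalities are the content of the paper's Theorem~\ref{thm:contractive} and the lemma $\|Bw\|\le 0.088$ (the paper recentres at $Bw$ and works in the ball of radius $0.042$, which is equivalent to your formulation).

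Two genuine gaps remain. First, bijectivity of $A$ is asserted, not proved: the lower bound $\|A\delta\|\ge 2\sqrt2\|\delta\|$ gives only injectivity with closed range, and in your Toeplitz picture surjectivity is not automatic either, because the extension imposes the extra constraint $d_0=0$ (codimension one inside the odd subspace) while the compression discards the $k=0$ output coordinate; one must check these two codimension-one defects match up, e.g.\ by showing the relevant pulled-back vector has nonzero zeroth coordinate. The paper avoids this by exhibiting an explicit two-sided inverse $B=(b_{k,n})$ with $AB=BA=I$. Second, the theorem is a purely quantitative statement, and your proposal defers all the constants; this is not a formality here, since the numbers close only barely: with $\|A^{-1}\|\le(2\sqrt2)^{-1}$ one gets $\|A^{-1}w\|\le 0.088$ and the nonlinear term maps the $0.13$-ball into a set of radius $0.042$, so the self-map inequality is $0.088+0.042\le 0.13$ with essentially no slack, and the contraction constant comes out as $0.73$. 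So your worry about sharpness is exactly right, but as written the proposal does not discharge it; carrying out the estimates along the paper's lines (splitting the remainder as $A\alpha$ plus a term controlled by $\sum_n\big((n+\tfrac12+k)^{-2}+(n+\tfrac12-k)^{-2}\big)\xi_n^2$) is what completes the proof.
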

 The next two sections will give the proof of this theorem.
 \section{The linear part \texorpdfstring{of \eqref{eq:equiv}}{}}
 
 Performing a linear approximation of the operator on the left-hand side of \eqref{eq:equiv}, we may write this equation as
\begin{equation} \label{eq:AQ}  A\delta + Q\delta = w ,\end{equation}
where
\begin{align*} w_k & \coloneqq  \frac{1}{\pi}\int_{-\frac{1}{2}}^{\frac{1}{2}} \frac{\cos \pi x}{(x-k)^2} dx; \\
     (A\delta )_k & \coloneqq \sum_{n=1}^{\infty} \left(\frac{1}{n+k+\frac12}+\frac{1}{n-k+\frac12}\right)\delta_n ; \\ 
     (Q\delta)_k & \coloneqq  \sum_{n=1}^{\infty} \int_0^{\delta_n} \Big(\frac{\cos \pi y}{n+\frac{1}{2}-y+k}-\frac{1}{n+\frac12+k}+\frac{\cos \pi y}{n+\frac{1}{2}-y-k}-\frac{1}{n+\frac12-k}\Big)dy.\end{align*}
We will use the notation $a_{k,n}\coloneqq \frac{1}{n+k+1/2}+\frac{1}{n-k+1/2}$ for $n\ge1$ so that 
\[ (Ax)_k=\sum_{n=1}^{\infty} a_{k,n} x_n. \]
The proof of Theorem~\ref{thm:eqHB} will rely on the fact that the infinite matrix $\frac{1}{\pi}A$ is ``almost'' unitary. 
\begin{lemma}\label{lem:norm}
 	The operator $A$ maps $\ell^2$ to $\ell^2$ and satisfies
 	\[2\sqrt{2}\|x\|\le \|Ax\|\le \pi\|x\|\]
 	for all $x$ in $\ell^2$. 
	%Moreover, if we denote $(\frac{1}{n+1/2})_{n\ge1}$ by $v$, then
 	%$\pi^{-1}A$ restricted to $v^{\perp}$ is an isometry.
\end{lemma}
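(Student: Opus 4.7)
The plan is to diagonalize $A$ via trigonometric integrals. The key observation is the closed-form representation of the matrix entries,
\[ \frac{1}{n+k+\tfrac{1}{2}}+\frac{1}{n-k+\tfrac{1}{2}} = 2\pi\int_0^1 \sin\!\bigl((n+\tfrac{1}{2})\pi x\bigr)\cos(k\pi x)\,dx, \]
which follows from the product-to-sum formula together with the fact that $\cos((n\pm k+\tfrac{1}{2})\pi)=0$. With this in hand, $(A\delta)_k$ becomes a single integral of $f$ against $\cos(k\pi x)$.

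Next I would bring in the two orthonormal bases of $L^2(0,1)$: the shifted sine system $\{\sqrt{2}\sin((n+\tfrac{1}{2})\pi x)\}_{n\ge 0}$ (eigenfunctions of $-d^2/dx^2$ with mixed boundary conditions) and the cosine system $\{1\}\cup\{\sqrt{2}\cos(k\pi x)\}_{k\ge 1}$. For $\delta\in\ell^2$, set
\[ f(x)\coloneqq \sqrt{2}\sum_{n=1}^{\infty}\delta_n\sin\!\bigl((n+\tfrac{1}{2})\pi x\bigr), \]
so that $\|f\|_{L^2(0,1)}=\|\delta\|$, while the identity above gives $(A\delta)_k=\pi\sqrt{2}\int_0^1 f(x)\cos(k\pi x)\,dx$, i.e.\ $(A\delta)_k$ equals $\pi$ times the $k$-th cosine Fourier coefficient of $f$. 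Applying Parseval on the cosine basis yields the exact identity
\[ \|A\delta\|^2 = \pi^2\bigl(\|\delta\|^2-c_0^2\bigr),\qquad c_0\coloneqq \int_0^1 f(x)\,dx, \]
which already proves that $A$ maps $\ell^2$ into itself and gives the upper bound $\|A\delta\|\le\pi\|\delta\|$. The missing constant mode in the cosine basis is precisely what prevents $\tfrac{1}{\pi}A$ from being unitary.

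For the lower bound I would compute
\[ c_0 = \sqrt{2}\sum_{n=1}^{\infty}\delta_n\int_0^1 \sin\!\bigl((n+\tfrac{1}{2})\pi x\bigr)\,dx = \frac{\sqrt{2}}{\pi}\sum_{n=1}^{\infty}\frac{\delta_n}{n+\tfrac{1}{2}}, \]
then apply Cauchy--Schwarz together with the summation $\sum_{n\ge 1}(n+\tfrac{1}{2})^{-2}=\tfrac{\pi^2}{2}-4$ (a direct consequence of $\sum_{n\ge 0}(2n+1)^{-2}=\pi^2/8$) to obtain $c_0^2\le (1-8/\pi^2)\|\delta\|^2$. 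Inserting this into the Parseval identity yields $\|A\delta\|^2\ge 8\|\delta\|^2$, which is the sharp lower bound.

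I do not anticipate any real obstacle. The only point requiring care is the interchange of summation and integration in the definition of $(A\delta)_k$, which is justified by $L^2$ convergence of the sine series for $f$ and boundedness of cosines. The whole argument hinges on the happy circumstance that the \emph{shifted} sines form a complete orthonormal system while the cosines miss only the constant function, so the defect of $\tfrac{1}{\pi}A$ from being an isometry is genuinely one-dimensional and is captured exactly by the scalar $c_0$.
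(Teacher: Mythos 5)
Your argument is correct, and it reaches the lemma by a genuinely different route from the paper. The paper works purely algebraically: it computes $\sum_k a_{k,n}a_{k,m}$ via partial-fraction evaluations of the sums $C(a_1,\dots,a_r)=\sum_{j\in\Z}\prod_i(j+a_i+\tfrac12)^{-1}$, obtaining the exact identity $\|Ax\|^2+2\bigl(\sum_n x_n/(n+\tfrac12)\bigr)^2=\pi^2\|x\|^2$ and then finishing with Cauchy--Schwarz and $\sum_{n\ge1}(n+\tfrac12)^{-2}=\tfrac{\pi^2}{2}-4$, exactly as you do at the end. You instead factor $\tfrac1\pi A$ through $L^2(0,1)$: the synthesis map onto the shifted sine system followed by analysis in the cosine system, so that Parseval gives $\|A\delta\|^2=\pi^2(\|\delta\|^2-c_0^2)$ with $c_0=\tfrac{\sqrt2}{\pi}\sum_n\delta_n/(n+\tfrac12)$ --- which is precisely the paper's identity, now derived from the integral representation of $a_{k,n}$ rather than from the $C$-sums. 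Your version has the advantage of explaining structurally why the defect from unitarity is the one-dimensional constant mode (the paper only remarks afterwards that $\tfrac1\pi A$ is an isometry on the orthogonal complement of $((n+\tfrac12)^{-1})$), and it also shows the bound $2\sqrt2$ is attained at $\delta_n=(n+\tfrac12)^{-1}$; the paper's choice of the $C$-sum machinery pays off later, since the same lemma is reused to verify the explicit inverse $B$ in the subsequent proposition. Two small remarks: completeness of the shifted sine system is not actually needed (orthonormality alone gives $\|f\|_{L^2(0,1)}=\|\delta\|$), whereas completeness of the cosine system is what you need for the Parseval identity and hence the lower bound, so that is the point to cite carefully; and the interchange of sum and integral is justified exactly as you say, by $L^2$ convergence of the partial sums against the bounded functions $\cos(k\pi x)$.
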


The proof of Lemma~\ref{lem:norm} will employ the following fact. 
\begin{lemma}\label{lem:partialf}
For $r\ge2$, $a_1,\dots,a_r\in\Z$, set
	\[C(a_1,\dots,a_r):=\sum_{n\in\Z}\frac{1}{(n+a_1+\frac12)\dots(n+a_r+\frac12)}\,.\]
Then $C(a_1,\dots,a_r)=0$ whenever $a_1,\dots,a_{r}$ are distinct integers. Moreover, we
have
\[C(a,a)=\pi^2\,,\qquad C(a,b,b)=\frac{\pi^2}{a-b}\,,\qquad
	C(a,a,b,b)=\frac{2\pi^2}{(a-b)^2}\,.\]
\end{lemma}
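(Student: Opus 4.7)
The plan is to reduce to the two-parameter case $r=2$ by an elementary partial-fraction recursion, and to settle that case by direct summation. First, since every $a_i\in\Z$, each denominator $n+a_i+\tfrac{1}{2}$ is bounded away from zero for $n\in\Z$, so the summand is $O(|n|^{-r})$ and $C(a_1,\dots,a_r)$ converges absolutely whenever $r\ge 2$; this legitimates all the manipulations below.

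For the base case $r=2$, the identity $C(a,a)=\pi^2$ follows by shifting the summation index and using the classical evaluation $\sum_{k\ge 0}(2k+1)^{-2}=\pi^2/8$. For distinct integers $a\ne b$, I would invoke the Mittag--Leffler expansion $\pi\cot(\pi z)=\lim_{N\to\infty}\sum_{|n|\le N}(z-n)^{-1}$ to obtain
\[ \sum_{n\in\Z}\frac{1}{(n+\alpha)(n+\beta)}=\frac{\pi\cot(\pi\alpha)-\pi\cot(\pi\beta)}{\beta-\alpha}\qquad(\alpha\ne\beta), \]
and specialize to $\alpha=a+\tfrac12$, $\beta=b+\tfrac12$; since $\cot(\pi(k+\tfrac12))=0$ for every $k\in\Z$, both cotangent values vanish, giving $C(a,b)=0$.

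For $r\ge 3$, whenever $a_1\ne a_r$ the partial fraction
\[ \frac{1}{(x+a_1+\tfrac12)(x+a_r+\tfrac12)}=\frac{1}{a_r-a_1}\Bigl(\frac{1}{x+a_1+\tfrac12}-\frac{1}{x+a_r+\tfrac12}\Bigr), \]
multiplied by $\prod_{i=2}^{r-1}(x+a_i+\tfrac12)^{-1}$ and summed over $x=n\in\Z$ (justified by absolute convergence, since each resulting sum still has $r-1\ge 2$ factors), yields the recursion
\[ C(a_1,\dots,a_r)=\frac{C(a_1,\dots,a_{r-1})-C(a_2,\dots,a_r)}{a_r-a_1}. \]
When the $a_i$ are pairwise distinct, $a_1\ne a_r$ and both shorter tuples remain pairwise distinct, so induction on $r$ gives $C(a_1,\dots,a_r)=0$. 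The same recursion, applied twice, delivers $C(a,b,b)=(C(a,b)-C(b,b))/(b-a)=\pi^2/(a-b)$ and then, via the intermediate value $C(a,a,b)=(C(a,a)-C(a,b))/(b-a)=\pi^2/(b-a)$, the identity $C(a,a,b,b)=(C(a,a,b)-C(a,b,b))/(b-a)=2\pi^2/(a-b)^2$.

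The only real subtlety I anticipate is the careful handling of the conditionally convergent series behind the Mittag--Leffler identity that is used to establish $C(a,b)=0$; once that base case is secured, the rest is a short algebraic unwinding of the one-line recursion.
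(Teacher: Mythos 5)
Your proof is correct and follows essentially the same route as the paper, which simply notes that the lemma is immediate from a partial fraction decomposition of the summands; your recursion $C(a_1,\dots,a_r)=\bigl(C(a_1,\dots,a_{r-1})-C(a_2,\dots,a_r)\bigr)/(a_r-a_1)$ together with the evaluations $C(a,a)=\pi^2$ and $C(a,b)=0$ (via vanishing of the cotangent at half-integers, or equivalently telescoping of symmetric partial sums) is exactly the detailed form of that one-line argument. All the stated special values check out, so there is nothing to correct.
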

\begin{proof} The result is immediate by a partial fraction decomposition of the summands.~\end{proof} 

\begin{proof}[Proof of Lemma~\ref{lem:norm}]
	Using Lemma~\ref{lem:partialf},
	we get
	\begin{equation*}
	\frac{2}{(n+\frac12)(m+\frac12)}+\sum_{k=1}^{\infty}a_{k,n}a_{k,m}\\
	=C(n,m)-C(n,-m) = \pi^2\delta_{n,m}\,.
	\end{equation*}
	Therefore
	\begin{equation*}
	\|Ax\|^2 +2\Big(\sum_{n\ge1}\frac{x_n}{n+\frac12}\Big)^2 =  \pi^2\|x\|^2\,,
	\end{equation*}
	which immediately implies that $\|Ax\|\le \pi\|x\|$. To get the lower bound $\|Ax\|\ge 2\sqrt{2}\|x\|$, we apply the Cauchy--Schwarz inequality to the second term on the right-hand side and use that
	\[ \sum_{n=1}^{\infty} \frac{1}{(n+\frac12)^2}=\frac{\pi^2}{2}-4. \qedhere \]
\end{proof}
Note that the above proof reveals the interesting fact that $\frac1\pi A$ restricted to the orthogonal complement of the sequence $((n+\frac12)^{-1})$ is an isometry. 

We next show that $A$ is invertible by identifying its inverse $B$. Lemma~\ref{lem:norm} shows that $\| Bx\|\le (2\sqrt{2})^{-1} \|x\|$, and this is the only information needed about $B$ in the proof of Theorem~\ref{thm:eqHB}. However, it is useful for our numerical work %(and presumably also for a more refined  analysis of the sequence $(\tau_n)$) 
to have an explicit expression for~$B$. 
\begin{proposition} %\label{lem:inverse}
The operator $B$ with matrix
	\[b_{n,k} =  \frac{1-(2n+1)^{-2}}{\pi^2(1-(2k)^{-2})}\, a_{k,n}\]
satisfies $AB=BA=I$.
\end{proposition}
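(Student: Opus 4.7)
My approach is to invert $A$ via the Sherman--Morrison formula applied to $A^T A$, exploiting the rank-one structure that already appeared in the proof of Lemma~\ref{lem:norm}. That proof shows
\[
A^T A = \pi^2 I - 2 v v^T, \qquad v_n = \tfrac{1}{n+1/2},
\]
and computes $\|v\|^2 = \pi^2/2 - 4$. Sherman--Morrison then gives $(A^T A)^{-1} = \pi^{-2}(I + \tfrac14 v v^T)$, so the operator $L := (A^T A)^{-1}A^T$ is a bounded left inverse of $A$. To see that $A$ is also surjective (so that $L = A^{-1}$), I would carry out the analogous Parseval computation using the orthonormal basis $\{\sqrt{2/\pi}\cos((n+\tfrac12)x)\}_{n\ge 0}$ of $L^2([0,\pi])$ to obtain
\[
A A^T = \pi^2 I - 16 w w^T, \qquad w_k = \tfrac{1}{4k^2-1},
\]
and conclude from the elementary partial fraction computation $16\|w\|^2 = \pi^2 - 8 < \pi^2$ that $A A^T$ is also invertible. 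Hence $A$ is bijective and $A^{-1} = L$.

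The second step is to write $L$ out entrywise. Expanding $((A^T A)^{-1}A^T)_{k,n}$ gives
\[
L_{k,n} = \tfrac{1}{\pi^2}\Bigl(a_{n,k} + \tfrac{v_k}{4}\sum_{j\ge 1} v_j\, a_{n,j}\Bigr),
\]
and the crucial auxiliary identity is $\sum_{j\ge 1} a_{n,j}/(j+\tfrac12) = 8/(4n^2-1)$. This follows from the partial fractions
\[
\tfrac{1}{(j+1/2)(j\pm n+1/2)} = \pm\tfrac{1}{n}\Bigl(\tfrac{1}{j+1/2} - \tfrac{1}{j\pm n+1/2}\Bigr),
\]
which turn the sum into a telescoping series collapsing to $\tfrac{1}{n}\sum_{j=1-n}^{n}\tfrac{1}{j+1/2} = \tfrac{8}{4n^2-1}$.

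The last step is purely algebraic: after clearing denominators, matching the resulting explicit expression for $L_{k,n}$ with the proposition's formula reduces to a short polynomial identity, an immediate consequence of $(2k+1)^2 - 1 = 4k(k+1)$. The main obstacle is the bookkeeping in this final verification, since both sides involve common denominators with five distinct linear factors; however, each individual manipulation is routine, and the argument is constructive in that it also produces the formula for $B$ from scratch.
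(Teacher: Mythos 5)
Your route is genuinely different from the paper's and, in substance, it works. The paper verifies directly, via the partial-fraction values of Lemma~\ref{lem:partialf}, the two identities $\sum_n a_{k_1,n}a_{k_2,n}\bigl(1-(2n+1)^{-2}\bigr)=\pi^2\bigl(1-(2k_1)^{-2}\bigr)\delta_{k_1,k_2}$ and $\sum_k a_{k,m}a_{k,n}\bigl(1-(2k)^{-2}\bigr)^{-1}=\pi^2\bigl(1-(2n+1)^{-2}\bigr)^{-1}\delta_{m,n}$, i.e.\ $AD_1A^T=\pi^2D_2$ and $A^TD_2^{-1}A=\pi^2D_1^{-1}$ with $D_1=\operatorname{diag}\bigl(1-(2n+1)^{-2}\bigr)$, $D_2=\operatorname{diag}\bigl(1-(2k)^{-2}\bigr)$, which exhibit $\pi^{-2}D_1A^TD_2^{-1}$ as a two-sided inverse. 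You instead construct the inverse from the rank-one structure: $A^TA=\pi^2I-2vv^T$ (already implicit in the proof of Lemma~\ref{lem:norm}), Sherman--Morrison, the companion identity $AA^T=\pi^2I-16ww^T$ with $16\|w\|^2=\pi^2-8$, and the telescoping sum $\sum_j a_{n,j}/(j+\tfrac12)=8/(4n^2-1)$. I checked all of these; they are correct, and the surjectivity argument via invertibility of $AA^T$ is fine. This is an attractive, constructive alternative that produces the inverse rather than merely verifying it.

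The genuine problem is the final matching step. Your computation gives $L_{k,n}=\pi^{-2}\bigl(a_{n,k}+\tfrac{2}{(k+1/2)(4n^2-1)}\bigr)$, whose closed form is $L_{k,n}=\frac{1-(2k+1)^{-2}}{\pi^2(1-(2n)^{-2})}\,a_{n,k}$; this is the \emph{transpose} of the matrix $(b_{k,n})$ as literally written, under the paper's convention $(Ax)_k=\sum_n a_{k,n}x_n$. Since $A$ is not symmetric (e.g.\ $a_{1,2}=\tfrac1{3.5}+\tfrac1{1.5}>0$ while $a_{2,1}=\tfrac1{3.5}-2<0$), the two matrices differ off the diagonal: for $k=1$, $n=2$ one has $L_{1,2}=\pi^{-2}\bigl(a_{2,1}+\tfrac{2}{(3/2)\cdot 15}\bigr)\approx-1.63\,\pi^{-2}$, whereas $b_{1,2}=\frac{1-5^{-2}}{\pi^2(1-2^{-2})}a_{1,2}\approx+1.22\,\pi^{-2}$. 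So the ``short polynomial identity'' you invoke does not exist for the literal $b_{k,n}$ (a diagonal spot-check $k=n$ is misleading, since there the transposition is invisible); the identity your algebra actually yields is $a_{n,k}+\tfrac{2}{(k+1/2)(4n^2-1)}=\frac{1-(2k+1)^{-2}}{1-(2n)^{-2}}\,a_{n,k}$. In your defence, the paper's own displayed sums prove exactly this transposed statement (its first computation is $(AD_1A^T)_{k_1,k_2}$, not $(BA)_{k_1,k_2}$), so the proposition must be read with the two indices of $b$ interchanged relative to the convention used for $a$ (equivalently, with $a_{k,n}$ replaced by $a_{n,k}$). Your write-up should state and prove that transposed formula explicitly, rather than assert a matching with the literal $b_{k,n}$, which would fail.
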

\begin{proof}
	Note that $a_{k,n}=a_{-k,n}=-a_{k,-1-n}$. Using this, we find that
	\begin{align*}
	\sum_{n=1}^{\infty}&a_{k_1,n}a_{k_2,n}(1-(2n+1)^{-2}) =
	\frac12\sum_{n\in\Z}a_{k_1,n}a_{k_2,n}(1-(2n+1)^{-2})\\
	&=\frac12\Big(C(k_1,k_2)+C(k_1,-k_2)+C(-k_1,k_2)+C(-k_1,-k_2)-C(k_1,k_2,-k_1,-k_2)\Big)\,,
	\end{align*}
	which clearly vanishes for $k_1\ne k_2$, and for $k_1=k_2=k$ we get
	$\pi^2(1-(2k)^{-2})$, from which we see that $BA=I$. Similarly, we calculate 
	\begin{align*}
	\sum_{k=1}^{\infty}&\frac{a_{k,m}a_{k,n}}{1-(2k)^{-2}} =
	\frac12\sum_{k\in\Z}a_{k,m}a_{k,n}\Big(1+\frac{\frac14}{k-\frac12}-\frac{\frac14}{k+\frac12}\Big)\\
	=\frac12\Big(&C(m,n)-C(m,-1{-}n)-C(-1{-}m,n)+C(-1{-}m,-1{-}n)\Big)\\
	+\frac18\Big(&C(-1,m,n)-C(-1,m,-1{-}n)-C(-1,-1{-}m,n)+C(-1,-1{-}m,-1{-}n)\Big)\\
	-\frac18\Big(&C(0,m,n)-C(0,m,-1{-}n)-C(0,-1{-}m,n)+C(0,-1{-}m,-1{-}n)\Big)\,,
	\end{align*}
	which after simplifications is easily seen to imply $AB=I$.
\end{proof}
\section{Proof of Theorem~\ref{thm:eqHB}}
Retaining the notation $B$ for the inverse of $A$ and setting $x\coloneqq \delta - Bw$, we see from \eqref{eq:AQ} that our task is to find a fixed point of the  
map 
\[ Gx\coloneqq -BQ(x+Bw). \] 
We will prove that a unique solution to this problem can be found in the ball
\[ K\coloneqq \Big\{ x=(x_k): \  \| x \| \le 0.042 \Big\}.\]

\begin{theorem}\label{thm:contractive}
$G$ maps $K$ into itself and $\| G(x)-G(y) \| \le 0.73\, \| x-y\|$ for all $x,y$ in $K$. 
\end{theorem}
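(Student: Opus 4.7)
The plan is to combine the bound $\|Bz\|\le (2\sqrt 2)^{-1}\|z\|$ of Lemma~\ref{lem:norm} with the key structural fact that $Q$ vanishes to second order at $\delta=0$; both conclusions of the theorem then reduce to explicit numerical estimates on a small ball. I would begin by quantifying the input $Bw$: an integration by parts produces the alternative formula $w_k=-\int_{-1/2}^{1/2}(\sin\pi x)/(x-k)\,dx$, from which termwise bounds $|w_k|=O(k^{-2})$ with explicit constants (obtained by Taylor-expanding $1/(x-k)$ in $x/k$) yield a concrete upper bound $c_w$ for $\|Bw\|$, and hence a radius $r\coloneqq 0.042+c_w$ for $\|\delta\|$ on $K+Bw$.

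The heart of the argument is the algebraic identity
\[
\frac{\cos\pi y}{a-y}-\frac{1}{a}=\frac{a(\cos\pi y-1)+y}{a(a-y)},
\]
which is $O(y)$ in $y$ whenever $a$ is bounded away from $0$; applied with $a=n+\tfrac12\pm k$ (minimum value $\tfrac12$) it shows that each integral $\int_0^{\delta_n}$ appearing in $(Q\delta)_k$ is $O(\delta_n^2)$, with leading-order expansion
\[
(Q\delta)_k = \tfrac12\sum_{n=1}^\infty \delta_n^2\Bigl(\tfrac{1}{(n+\tfrac12+k)^2}+\tfrac{1}{(n+\tfrac12-k)^2}\Bigr)+R_k^{(3)}(\delta),
\]
where $R_k^{(3)}(\delta)$ is a cubic remainder. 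Writing this as $\tfrac12 (R\delta^{(2)})_k+R_k^{(3)}$ with $\delta^{(2)}$ the entrywise square and $R$ the displayed matrix, I would then bound $\|R\|_{\ell^2\to\ell^2}$ concretely (a Schur test against row and column sums, each a tractable Hurwitz-zeta expression, should suffice) and control the cubic tail uniformly on $\|\delta\|\le r$. The resulting estimate $\|Q\delta\|\le C_Q\, r^2$ combined with Lemma~\ref{lem:norm} gives $\|G(x)\|\le C_Q r^2/(2\sqrt 2)$, which must come in under $0.042$.

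For the contraction estimate I would differentiate: since
\[
Q\delta-Q\delta'=\int_0^1 DQ\bigl(\delta'+s(\delta-\delta')\bigr)(\delta-\delta')\,ds,
\]
and the entries of $DQ(\eta)$ are the integrands of $(Q\delta)_k$ evaluated at $y=\eta_n$, the same identity shows that $DQ(\eta)$ factors approximately as $R$ composed (from the right) with diagonal multiplication by $\eta$. Hence $\|DQ(\eta)\|_{op}\le \|R\|\,\|\eta\|_\infty+O(\|\eta\|^2)\le C_Q'\,r$ on $K+Bw$, and the Lipschitz constant of $G$ is at most $C_Q'\,r/(2\sqrt 2)$, which I would verify is below $0.73$.

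The main obstacle is numerical tightness: the target constants $0.042$ and $0.73$ leave limited slack, so honest rather than crude bounds at each step are essential, particularly for $\|Bw\|$ and $\|R\|_{\ell^2\to\ell^2}$. The cubic remainder also has to be controlled uniformly in $k,n$, requiring care near the ``resonant'' indices $n=k$ where $n+\tfrac12-k=\tfrac12$ is minimal (no actual singularity, but a less favorable denominator that must be absorbed into the estimate for $\|R\|$).
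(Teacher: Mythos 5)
Your outline follows the paper's strategy in its broad lines: expand $Q$ around the origin, isolate the quadratic leading term governed by the matrix with entries $d_{n,k}=\tfrac{1}{(n+\frac12+k)^2}+\tfrac{1}{(n+\frac12-k)^2}$ (your $R$), bound that matrix by a Schur-type test (the paper gets $\tfrac12+\pi^2$), use $\|B\|\le(2\sqrt2)^{-1}$ from Lemma~\ref{lem:norm} together with a bound $\|Bw\|\le 0.088$ to work on the ball of radius $0.13$, and finish with explicit numerics. The genuine gap is in how you propose to control everything beyond that quadratic term. Your identity leaves remainders whose kernels have only \emph{first-power} denominators: in the self-map step the cubic tail contains $\sum_n\int_0^{\delta_n}\frac{\cos\pi y-1}{n+\frac12\pm k-y}\,dy$, and in the contraction step $DQ(\eta)$ contains the matrix with entries $\frac{\cos\pi\eta_n-1}{n+\frac12\pm k-\eta_n}$. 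After taking absolute values, the relevant matrix $\big(|n+\frac12-k|-\|\eta\|_\infty\big)^{-1}$ is \emph{not} bounded on $\ell^2$: its row sums diverge logarithmically, so no Schur test applies, and the claim $\|DQ(\eta)\|\le\|R\|\,\|\eta\|_\infty+O(\|\eta\|^2)$ is not obtainable by the routine operator-norm estimates your sketch invokes; the ``resonant $n=k$'' entries you worry about are individually harmless and are not the real difficulty. The paper sidesteps this entirely by a different split: the only first-power piece is kept with denominator exactly $n+\frac12\pm k$, so that it is exactly $A\alpha$ with $\alpha_n=\pi^{-1}\sin(\pi\xi_n)-\xi_n$ cubically small, and applying $B$ removes the matrix altogether since $BA=I$; every other remainder has squared denominators. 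Your route can be repaired without that trick, because the problematic kernels are only ever applied to summable sequences ($|\delta_n|^3$, resp.\ $\eta_n^2|v_n|$), so one can use $\|Mu\|_2\le\|u\|_{\ell^1}\sup_n\|Me_n\|_2$ with explicit column norms --- but that is an extra idea your proposal does not supply.

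This matters because the numerical slack is essentially zero: the paper's own final constants are about $0.0419$ and $0.728$ against the targets $0.042$ and $0.73$, and the unavoidable main term of the Lipschitz bound, $(2\sqrt2)^{-1}(\tfrac12+\pi^2)\cdot 0.13\approx 0.477$, leaves only about $0.25$ for all remainder contributions. A crude treatment of the first-power pieces (say, pulling out $\sup|a-\eta_n|^{-1}\le(0.37)^{-1}$ where possible) already costs roughly $0.17$ from the $\eta_n^2/(a^2(a-\eta_n))$ term alone and gives nothing at all for the $(\cos\pi\eta_n-1)/(a-\eta_n)$ term; with the column-norm repair sketched above one lands near $0.71$ for the contraction and near $0.04$ for the self-map, so your constants are attainable, but exactly these estimates are what your outline defers, and they cannot be waved through as ``uniform control of the cubic tail.'' Finally, do keep the bound on $\|Bw\|$ explicit (the paper proves $\|Bw\|\le 0.088$ separately); your integration-by-parts formula for $w_k$ is adequate for that, and it is needed to ensure $0.042+\|Bw\|\le 0.13$ so that all of the above estimates are applied on the correct ball.
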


In view of Banach's fixed point theorem, Theorem~\ref{thm:eqHB} follows from Theorem~\ref{thm:contractive} and  the next lemma.

\begin{lemma}
We have $\| Bw \|\le 0.088$. \end{lemma}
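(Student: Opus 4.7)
The plan is to apply the operator norm bound $\|B\|\le (2\sqrt{2})^{-1}$, which is immediate from the lower estimate $\|Ax\|\ge 2\sqrt{2}\,\|x\|$ in Lemma~\ref{lem:norm}, and then to compute $\|w\|$ explicitly via Fourier analysis. Since $2\sqrt{2}\cdot 0.088 \approx 0.249$, it suffices to show $\|w\|\le 0.249$.

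The key observation is already contained in the integration by parts used to derive \eqref{eq:equiv}: running it in reverse yields
\[
w_k \;=\; (-1)^{k+1}\,\pi\,\psi(k), \qquad \psi(x)\coloneqq \int_{-1/2}^{1/2}\sinc\!\bigl(\pi(x-t)\bigr)\,dt \;=\; \bigl(\chi_{(-1/2,1/2)}\ast\varphi_2\bigr)(x).
\]
The function $\psi$ is real, even, and belongs to $PW^2$, with Fourier transform $\hat\psi(\xi)=\frac{2\sin(\xi/2)}{\xi}\chi_{[-\pi,\pi]}(\xi)$. Parseval's identity for $PW^2$ (the Shannon sampling identity) gives $\sum_{k\in\Z}\psi(k)^2=\|\psi\|_2^2$, and using evenness we obtain
\[
\|w\|^2 \;=\; \pi^2\sum_{k=1}^\infty \psi(k)^2 \;=\; \frac{\pi^2}{2}\bigl(\|\psi\|_2^2-\psi(0)^2\bigr).
\]

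Both quantities on the right admit clean closed forms. Directly from the definition, $\psi(0)=\frac{2}{\pi}\operatorname{Si}(\pi/2)$. For the $L^2$ norm, Plancherel gives $\|\psi\|_2^2=\frac{2}{\pi}\int_0^\pi\frac{1-\cos\xi}{\xi^2}\,d\xi$, and one integration by parts (noting that the boundary term at $0$ vanishes) reduces this to $\|\psi\|_2^2=\frac{2}{\pi}\bigl(\operatorname{Si}(\pi)-2/\pi\bigr)$. Substituting,
\[
\|w\|^2 \;=\; \pi\operatorname{Si}(\pi)-2-2\operatorname{Si}(\pi/2)^2.
\]
With the standard values $\operatorname{Si}(\pi/2)\approx 1.37076$ and $\operatorname{Si}(\pi)\approx 1.85194$ this is about $0.0599$, so $\|w\|\le 0.245$ and hence $\|Bw\|\le 0.245/(2\sqrt{2})\le 0.0867<0.088$.

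The only real obstacle is numerical certification: the squared margin $8\cdot (0.088)^2-\|w\|^2\approx 0.002$ is modest, so $\operatorname{Si}(\pi)$ and $\operatorname{Si}(\pi/2)$ must be certified to roughly four decimal places. This is standard (either from tables or by truncating the alternating Taylor series with explicit remainder bounds), but it is the one step that is not purely formal.
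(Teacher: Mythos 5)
Your argument is correct and reaches the stated bound, but it estimates $\|w\|$ by a genuinely different route than the paper. Both proofs reduce to the operator bound $\|B\|\le (2\sqrt2)^{-1}$ from Lemma~\ref{lem:norm}; the difference lies entirely in how $\|w\|$ is controlled. The paper is content with a crude comparison: it dominates $w_k$ by $k^{-2}$ times the $k=1$ integral, so that $\|w\|\le \sqrt{\zeta(4)}\,w_1$, and only one elementary one-dimensional integral has to be evaluated numerically. You instead undo the integration by parts to recognize $w_k=(-1)^{k+1}\pi\psi(k)$ with $\psi=\chi_{(-1/2,1/2)}\ast\varphi_2\in PW^2$, and then the Shannon/Parseval identity $\sum_{k\in\Z}\psi(k)^2=\|\psi\|_2^2$ together with Plancherel gives the exact closed form $\|w\|^2=\pi\,\operatorname{Si}(\pi)-2-2\operatorname{Si}(\pi/2)^2$. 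This is sharper (it gives $\|Bw\|\lesssim 0.0867$ versus roughly $0.0873$ from the paper's bound) and structurally cleaner, at the price of needing certified values of $\operatorname{Si}$ at two points, which, as you note, is routine via the alternating Taylor series. One numerical caveat: with $\operatorname{Si}(\pi)\approx 1.8519371$ and $\operatorname{Si}(\pi/2)\approx 1.3707622$ the closed form gives $\|w\|^2\approx 0.06005$, i.e.\ $\|w\|\approx 0.2451$, so your intermediate claims ``$\approx 0.0599$'' and ``$\|w\|\le 0.245$'' are off in the last digit; this is harmless, since the threshold you actually need is $\|w\|\le 2\sqrt2\cdot 0.088\approx 0.2489$, and $0.2451/(2\sqrt2)\le 0.0867<0.088$ still holds with ample margin.
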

\begin{proof}
We see that 
\[ \frac{1}{\pi} \int_{-\frac{1}{2}}^{\frac{1}{2}} \frac{\cos \pi x}{(x-n)^2} dx \le 
\frac{1}{\pi n^2} \int_{-\frac{1}{2}}^{\frac{1}{2}} \frac{\cos \pi x}{(1-x)^2} dx ,\]
and so 
\[ \| w \|_2 \le  \frac{\sqrt{\zeta(4)}}{\pi} \int_{-\frac{1}{2}}^{\frac{1}{2}} \frac{\cos \pi x}{(x-1)^2} dx. \]
By Lemma~\ref{lem:norm}, we then get the desired bound
\[ \| Bw \|_2 \le  \frac{\pi}{12\sqrt{5} } \int_{-\frac{1}{2}}^{\frac{1}{2}} \frac{\cos \pi x}{(x-1)^2} dx  \le 0.088. \qedhere \]
\end{proof}

%= \frac{1}{k^2} \sum_{m=0}^{\infty} \frac{c_m}{k^{2m}}, 
%where 
%\[ 0< c_m \le 2^{1-2m}. \] 

The proof of Theorem~\ref{thm:contractive} splits naturally into the following two steps. 
\begin{proof}[Step 1: Proof that $G$ maps $K$ into itself]
 We set $\xi\coloneqq x+Bw$ and $\varepsilon\coloneqq \|\xi\|$. We begin by noting that
\[ \frac{\cos\pi y}{t-y}=\frac{1}{t}\left(1+\frac{y}{t}+\frac{\frac{y^2}{t^2}}{1-\frac{y}{t}}\right)\cos\pi y, \]
whence
\[\frac{\cos\pi y}{t-y}-\frac{1}{t}=\frac{\cos \pi y-1}{t}+\frac{1}{t}\left(\frac{y}{t}+\frac{\frac{y^2}{t^2}}{1-\frac{y}{t}}\right)\cos\pi y. \]
If we set
\begin{equation}  \alpha_n\coloneqq \int_0^{\xi_n} \big(\cos \pi y-1\big) dy = 
\pi^{-1}\sin (\pi \xi_n)-\xi_n \end{equation}
and 
\[ \beta_{n,k}^{\pm}\coloneqq  \int_{0}^{\xi_n} \left(1+\frac{\frac{y}{n+\frac12\pm k}}{1-\frac{y}{n+\frac12\pm k}}\right)y\cos\pi y dy ,\] 
then we may write
\[ (Q\xi)_k = (A\alpha)_k + \sum_{n=1}^\infty \left(\frac{\beta_{n,k}^{+}}{(n+\frac12+k)^2} +\frac{\beta^{-}_{n,k}}{(n+\frac12-k)^2}\right). \]
We see that
\[ |\beta_{n,k}^{\pm}|\le \frac{\xi_n^2}{2}+\frac{2|\xi_n|^3}{3(1-2|\xi_n|)}\le \left(\frac12+\frac{2\varepsilon}{3(1-2\varepsilon)}\right) \xi_n^2=:c(\varepsilon) \xi_n^2. \] 
Setting $D\coloneqq (d_{n,k})$ with $d_{n,k}\coloneqq \frac{1}{(n+\frac12+k)^2} +\frac{1}{(n+\frac12-k)^2}$, we therefore find that
\begin{equation} \label{eq:BQe}  \| BQ \xi \| \le \| \alpha \| + (2\sqrt{2})^{-1} c(\varepsilon) \| D (\xi_n^2)\|.\end{equation}
Since
\[ |\alpha_n|\le \frac{\pi^2}{6} |\xi_n|^3 ,\]
we have
\begin{equation} \label{eq:alpha}  \| \alpha \|\le \zeta(2) \varepsilon^3. \end{equation}
By duality and an application of the Cauchy--Schwarz inequality,
\begin{equation}  \label{eq:Db} \|D (\xi_n^2) \|  \le \sup_{q:\|q\|=1} \sum_{k=1}^\infty \sum_{n=1}^\infty \left(\frac{1}{(n+\frac12+k)^2} +\frac{1}{(n+\frac12-k)^2} \right) q_k \xi_n^2  
    \le C \| (\xi_n^2) \| \le C \varepsilon^2, \end{equation}
   where 
   \[ C\coloneqq \sum_{n=2}^\infty \frac{1}{(n+\frac12)^2}+2\sum_{k=0}^\infty \frac{1}{(k+\frac12)^2} \le \frac{1}{2}+6\zeta(2).\]
   Inserting \eqref{eq:alpha} and \eqref{eq:Db} into \eqref{eq:BQe}, we conclude that
   \[  \| BQ \xi \| \le \Big(\zeta(2) \varepsilon + \frac{3}{\sqrt{2}} c(\varepsilon) \big(\zeta(2)+\frac{1}{12}\big)  \Big) \varepsilon^2 \le 0.042 \] 
   if $\varepsilon\le 0.13$. This gives the desired result since $\|Bw\|\le 0.088$.
%When $|t|\ge 5|y|$, it follows that
%\begin{align*} |(Q\delta)_k| \le & \sum_{n=1}^{\infty} \left(\frac{\delta_n^3}{6(n+k+\frac12)}+\frac{\delta_n^2}{2(n+k+\frac12)^2}+\frac{5\delta_n^3}{12(n+k+\frac12)^3}\right) \\ 
%& +\sum_{n=1}^{\infty} \left(\frac{\delta_n^3}{6|n-k+\frac12|}+\frac{\delta_n^2}{2(n-k+\frac12)^2}+\frac{5\delta_n^3}{12|n-k+\frac12|^3}\right). \end{align*}  \end{proof}
 \end{proof}
\begin{proof}[Step 2: Proof of the contractivity of $G$] Let $x$ and $z$ be two arbitrary points in $K$, and set $\xi\coloneqq x+Bw$ and $\eta\coloneqq z+Bw$. We set
$\varepsilon\coloneqq \max(\| \xi\|, \| \eta \|)$ and argue as in the preceding proof to get
\[ (Q\xi)_k - (Q\eta)_k = (A\alpha)_k + \sum_{n=1}^\infty \left(\frac{\beta_{n,k}^+}{(n+\frac12+k)^2} +\frac{\beta_{n,k}^-}{(n+\frac12-k)^2}\right), \]
where now
\[ \alpha_n\coloneqq  \int_{\eta_n}^{\xi_n} \big(\cos \pi y-1\big) dy = \pi^{-1}\big(\sin (\pi \xi_n)-\sin (\pi \eta_n)-(\xi_n-\eta_n)\big) \]
and 
\[ \beta_{n,k}^{\pm}= \int_{\eta_n}^{\xi_n} \left(1+\frac{\frac{y}{n+\frac12\pm k}}{1-\frac{y}{n+\frac12\pm k}}\right)y\cos\pi y dy .\] 
In the present case, we have
\[ |\alpha_n| \le \frac{\pi^2}{6}|\xi_n^3-\eta_n^3|\le \frac{\pi^2\varepsilon^2}{2}|x_n-z_n| \]
 and
 \[ |\beta_{n,k}^{\pm}| \le \frac{|\xi_n^2-\eta_n^2|}{2} +\frac{2|\xi_n^3-\eta_n^3|}{3(1-2\varepsilon)}\le \left(\varepsilon+\frac{2\varepsilon^2}{1-2\varepsilon}\right)  |x_n-z_n|.\]
 Estimating as before, we get that 
 \[ \| BQ \xi - BQ \eta \| \le  \left(\frac{\pi^2\varepsilon^2}{2}+\frac{\big(\frac{1}{2}+\pi^2\big)}{2\sqrt{2}}\big(\varepsilon+\frac{2\varepsilon^2}{1-2\varepsilon}\big)   \right) \| x-z\|, \] 
 where 
 \[ \frac{\pi^2\varepsilon^2}{2}+\frac{\big(\frac{1}{2}+\pi^2\big)}{2\sqrt{2}}\big(\varepsilon+\frac{2\varepsilon^2}{1-2\varepsilon}\big)\le 0.73\]
 when $\varepsilon\le 0.13$.
 \end{proof}

\bibliographystyle{amsplain} 
\bibliography{pwpeval}

@article {LL15,
    AUTHOR = {Levin, E. and Lubinsky, D.},
     TITLE = {{$L_p$} {C}hristoffel functions, {$L_p$} universality, and
              {P}aley-{W}iener spaces},
   JOURNAL = {J. Anal. Math.},
  FJOURNAL = {Journal d'Analyse Math\'{e}matique},
    VOLUME = {125},
      YEAR = {2015},
     PAGES = {243--283}
}

@article {BCOS,
 AUTHOR = {Brevig, O.-F. and Chirre, A. and Ortega-Cerd\`{a}, J. and Seip, K.},
     TITLE = {Point evaluation in {P}aley--{W}iener spaces},
   JOURNAL = {J. Anal. Math.},
  FJOURNAL = {},
    VOLUME = {153},
      YEAR = {2024},
     PAGES = {595--670},
  MRNUMBER = {},
  }

@incollection {Lubinsky17,
    AUTHOR = {Lubinsky, D. S.},
     TITLE = {Scaling limits of polynomials and entire functions of
              exponential type},
 BOOKTITLE = {Approximation theory {XV}: {S}an {A}ntonio 2016},
    SERIES = {Springer Proc. Math. Stat.},
    VOLUME = {201},
     PAGES = {219--238},
 PUBLISHER = {Springer, Cham},
      YEAR = {2017}
}

@book {Levin96,
    AUTHOR = {Levin, B. Ya.},
     TITLE = {Lectures on {E}ntire {F}unctions},
    SERIES = {Translations of Mathematical Monographs},
    VOLUME = {150},
      NOTE = {In collaboration with and with a preface by Yu. Lyubarskii, M.
              Sodin and V. Tkachenko,
              Translated from the Russian manuscript by Tkachenko},
 PUBLISHER = {American Mathematical Society, Providence, RI},
      YEAR = {1996},
     PAGES = {xvi+248}
}

@article {CMS19,
    AUTHOR = {Carneiro, E. and Milinovich, M.~B. and Soundararajan,
              K.},
     TITLE = {Fourier optimization and prime gaps},
   JOURNAL = {Comment. Math. Helv.},
  FJOURNAL = {Commentarii Mathematici Helvetici. A Journal of the Swiss
              Mathematical Society},
    VOLUME = {94},
      YEAR = {2019}, 
     PAGES = {533--568}
}

@incollection {HB93,
    AUTHOR = {H\"{o}rmander, L. and {B}ernhardsson, B.},
     TITLE = {An extension of {B}ohr's inequality},
 BOOKTITLE = {Boundary {V}alue {P}roblems for {P}artial {D}ifferential {E}quations and
              {A}pplications},
    SERIES = {RMA Res. Notes Appl. Math.},
    VOLUME = {29},
     PAGES = {179--194},
 PUBLISHER = {Masson, Paris},
      YEAR = {1993}
}

\end{document}